\theoremstyle{plain}
  \newtheorem{theorem}{Theorem}[section]
  \newtheorem{corollary}[theorem]{Corollary}
  \newtheorem{fact}[theorem]{Fact}
  \newtheorem{proposition}[theorem]{Proposition}
\theoremstyle{definition}
  \newtheorem{definition}[theorem]{Definition}
  \newtheorem{example}[theorem]{Example}
  \newtheorem{remark}[theorem]{Remark}
\newtheorem{maina}{Main Theorem}
  \newtheorem{mainb}{Main Theorem}
\newcommand{\Z}{\mathbb{Z}}
\newcommand{\Q}{\mathbb{Q}}
\newcommand{\op}{\mathrm{op}}
\newcommand{\F}{\mathcal{F}}
\newcommand{\K}{\mathcal{K}}
\begin{document}

\author{Kohei Tanaka}
\address{Institute of Social Sciences, School of Humanities and Social Sciences, Academic Assembly, Shinshu University, Japan.}
\email{tanaka@shinshu-u.ac.jp}
\title{Minimal networks for sensor counting problem using discrete Euler calculus}
%\thanks{This work was supported by JSPS KAKENHI Grant Number 15K17535.}

\maketitle

{\footnotesize 2010 Mathematics Subject Classification : 55P10, 06A07}

{\footnotesize Keywords : Euler calculus; sensor network; (weak) beat point}

\begin{abstract}
This paper proposes a method to reduce points in acyclic sensor networks enumerating targets using the integral theory with respect to Euler characteristic.
For an acyclic network (a partially ordered set) equipped with sensors detecting targets, we find reducible points for enumerating targets, as a generalization of weak beat points (homotopically reducible points).
This theory is useful for improving the reliability and optimization of acyclic sensor networks.
\end{abstract}

%%%%%%%%%%%% section 1 %%%%%%%%%%%%%%%%%%%%

\section{Introduction}

The original idea that was used to apply the integration theory with respect to Euler characteristic (known as {\em Euler calculus} or {\em Euler integration}) to sensor networks was developed by Baryshnikov and Ghrist \cite{BG09}. They proposed a method for enumerating targets lying on a sensor field using Euler calculus.

The author drew inspiration from their work and established a discrete version of their work for finite categories, especially for finite partially ordered sets (posets) in \cite{Tan16}. By regarding an acyclic network flowing only in one direction as a finite poset, the discrete version of Euler calculus enumerates targets lying on an acyclic network with sensors detecting the targets.

\begin{fact}[Theorem 4.1 of \cite{Tan16}]
Let $(P,T,h)$ be an acyclic sensor network consisting of a finite poset $P$ with sensors, a set $T$ of targets lying on $P$, and the counting function $h$ on $P$ given by the sensors detecting the targets. The number of targets $T^{\sharp}$ is equal to the Euler calculus of the counting function:
\[
\int_{P} h d \chi = T^{\sharp}.
\]
\end{fact}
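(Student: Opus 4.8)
The plan is to reduce the identity to two ingredients: the additivity (valuation property) of the discrete Euler integral, and the fact that each target contributes a detection region of Euler characteristic one. First I would unwind the definition of the counting function target by target. Writing $S_t \subseteq P$ for the set of sensors detecting the target $t \in T$, the value $h(p)$ is by construction the number of targets registered at the sensor $p$, so that
\[
h = \sum_{t \in T} \mathbf{1}_{S_t},
\]
where $\mathbf{1}_{S_t}$ denotes the indicator function of $S_t$. Since $T$ is finite, this exhibits $h$ as a finite $\Z$-linear combination of indicator functions on $P$, hence as a constructible function to which the integral applies.

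Next I would appeal to the fundamental property that integration against Euler characteristic is a valuation, and therefore additive under finite sums of constructible functions:
\[
\int_P h \, d\chi = \sum_{t \in T} \int_P \mathbf{1}_{S_t} \, d\chi = \sum_{t \in T} \chi(S_t).
\]
Here I use that the Euler integral of a single indicator function returns the combinatorial Euler characteristic of its support, which is the base case built into the definition of the discrete Euler calculus on $P$ with its Alexandrov topology.

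Finally I would invoke the structural hypothesis packaged into the notion of an acyclic sensor network: the region $S_t$ of sensors detecting a given target is order-contractible (for instance a principal filter or an interval possessing an extremal element), so that its Euler characteristic equals $1$. Substituting $\chi(S_t) = 1$ for every $t$ yields
\[
\int_P h \, d\chi = \sum_{t \in T} 1 = T^{\sharp},
\]
which is the assertion.

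The hard part will be the additivity step rather than the bookkeeping. One must confirm that the discrete Euler integral genuinely behaves as a valuation on $\Z$-valued functions over a finite poset, i.e.\ that the relevant Euler characteristic obeys the inclusion--exclusion relation $\chi(A \cup B) = \chi(A) + \chi(B) - \chi(A \cap B)$ on the (possibly overlapping) detection regions rather than some multiplicative law; this is the delicate feature that distinguishes Euler characteristic as a \emph{measure} from its homotopy-invariant guise. Once additivity is secured, the only remaining geometric input is that each target is detected along a contractible region of the network, and the entire computation collapses to counting the elements of $T$.
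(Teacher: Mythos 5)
Your proposal is correct and reconstructs essentially the argument behind Theorem 4.1 of \cite{Tan16}, which this paper imports as a Fact without reproving it: the counting function decomposes as $h=\sum_{t\in T}\delta_{S_t}$, each detection region $S_t$ is the principal filter $P_{\geq b_t}$ generated by the upper endpoint $b_t$ of the Hasse edge carrying $t$, and so the definition of the integral gives $\int_P h\,d\chi=\sum_{t\in T}\chi(P_{\geq b_t})=T^{\sharp}$. The one point to phrase more carefully is your ``base case'': in this discrete framework $\int_P \delta_S\,d\chi=\chi(S)$ is \emph{not} true for arbitrary subsets $S$ (for $P=\{a<b\}$ and $S=\{a\}$ the integral is $0$ while $\chi(S)=1$), so the fact that each $S_t$ is a filter --- which you do establish, via its minimum $b_t$ --- must come before, not after, the linearity step; once the $S_t$ are known to be filters, additivity is built into the well-definedness of the filter linear form via inclusion--exclusion, so it is not actually the hard part you anticipate.
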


This paper is a continuation of the study of \cite{Tan16} from the viewpoint of the reliability and optimization for our sensor network theory. 
The research question pertains to a practical problem: if some sensors break down and return an incorrect counting function, can we enumerate the correct number of targets by using Euler calculus? 

We show that homotopically reducible points (weak down-beat points) do not affect the Euler calculus.
This is a naturally prospective result because the Euler characteristic is a homotopical invariant.
Furthermore, we introduce a more generalized notion termed {\em $\chi$-points}, and show the same property as above. For a point $x$ in a finite poset $P$, we call it a {\em $\chi$-point} if $P_{>x}=\{y \in P \mid y>x\}$ has the Euler characteristic $1$. 
The {\em $\chi$-minimal model} $P_{\chi}$ of a finite poset $P$ is obtained by removing all $\chi$-points one by one. We can simplify the computation of Euler calculus by restricting functions onto the $\chi$-minimal model.

\begin{maina}[Corollary \ref{main}]
For a function $h$ on a finite poset $P$, we have
\[
\int_{P} h d\chi=\int_{P_{\chi}} h_{|P_{\chi}} d\chi.
\]
\end{maina}

Moreover, the following theorem states that we do not need sensors at $\chi$-points to enumerate targets lying on an acyclic network. This is a useful result for cost-cutting or maintenance of sensors.

\begin{mainb}[Theorem \ref{main2}]
Let $h,h'$ be two functions on a finite poset $P$. If $h_{|P_{\chi}}=h'_{|P_{\chi}}$, then
\[
\int_{P}h d \chi = \int_{P} h' d\chi.
\]
\end{mainb}

The remainder of this paper is organized as follows.
Section 2 recalls the theory of discrete Euler calculus for functions on posets based on \cite{Tan16}.
Section 3 describes the homotopy theory for finite posets, including the definition of (weak) beat points, and $\chi$-points. At the last of Section 3, we show that the $\chi$-minimal model is uniquely determined, i.e., it does not depend on the order of removing $\chi$-points.
In Section 4, we discuss our main theorems with respect to reduction of $\chi$-points in acyclic sensor network. We use the notions of pushforwards and pullbacks of functions.

%%%%%%%%%%%%% Section 2 %%%%%%%%%%%%%%%%%%%%%%

\section{Discrete Euler calculus for functions on posets}

We begin by recalling the fundamental notions and properties of discrete Euler calculus.
The integration theory with respect to Euler characteristic was originally introduced independently by Viro \cite{Vir88} and Schapira \cite{Sch89}. 
Subsequently, Baryshnikov and Ghrist proposed its application to sensor networks \cite{BG09}. They established a way to use Euler calculus to enumerate targets lying on a field.

In this paper, we discuss a combinatorial analog of their approach \cite{Tan16}. A {\em network} is a finite graph consisting of nodes and lines spanning them.
We assume that our network transmits energy, information, or objects.
A network is considered {\em acyclic} if it flows only in one direction (never returning to the original position).  
Examples of acyclic networks include a stream of a river, transmission of electricity, and one-way traffic.
Acyclic networks such as these can be regarded as finite posets.
Two nodes $x$ and $y$ are ordered $x \leq y$ if the network flows from $x$ to $y$.
Here the Euler calculus is discussed over a function on a finite poset.

The Euler characteristic is well known as a classical topological (homotopical) invariant. 
It is defined not only for geometric objects, but also for combinatorial objects such as posets.
The Euler characteristic of a poset was introduced by Rota \cite{Rot64} using the notion of M\"obius inversion.
We also refer the readers to Leinster's paper \cite{Lei08} about the Euler characteristic for categories as a generalization of posets.

\begin{definition}\label{Euler_chara}
Suppose that $P$ is a finite poset. The {\em zeta function} of $P$ is a $(0,1)$-matrix
$\zeta : P \times P \to \Q$ defined by $\zeta(x,y)=1$ if $x \leq y$, and $\zeta(x,y)=0$ otherwise. This is a regular matrix whose determinant is $1$.
The {\em Euler characteristic} $\chi(P)$ of $P$ is defined as the sum of all elements in the inverse matrix $\zeta^{-1}$:
\[
\chi(P) = \sum_{x,y \in P} \zeta^{-1}(x,y).
\]
We can also describe it using the notions of {\em weightings} and {\em coweightings} introduced by Leinster \cite{Lei08}.
A {\em weighting} on $P$ is a column vector $(k^y)_{y \in P}$ satisfying $\sum_{y \in P}\zeta(x,y)k^y=1$ for any $x \in P$. Dually, a {\em coweighting} on $P$ is a low vector $(\ell_x)_{x \in P}$ satisfying $\sum_{x \in P} \ell_x\zeta(x,y)=1$ for any $y \in P$.
Every finite poset $P$ has a unique weighting $k^y= \sum_{x \in P} \zeta^{-1}(x,y)$ and a unique coweighting $\ell_x = \sum_{y \in P} \zeta^{-1}(x,y)$.
The Euler characteristic can be described as follows:
\[
\chi(P) = \sum_{y \in P}k^y = \sum_{x \in P} \ell_x.
\]
\end{definition}

The Euler characteristic of a finite poset is closely related to the topological one through the order complex.
The {\em order complex} $\K(P)$ of a poset $P$ is a simplicial complex 
whose $n$-simplex corresponds to a sequence $p_{0}< p_{1} < \ldots <p_{n}$. For any finite poset $P$, it holds the equality $\chi(P)=\chi(\K(P))$, where the right hand is the topological Euler characteristic. We regard it as a homotopical measure on posets, and apply it to the integration of functions on posets.

\begin{definition}
Let $P$ be a finite poset. A {\em filter} $Q$ is a subposet of $P$ closed under the upper order, i.e., 
if $x \in Q$ and $y \in P$ with $x \leq y$, then $y \in Q$ whenever.
The prime filter $P_{\geq x}$ for a point $x$ in $P$ is defined by $P_{\geq x}=\{y \in P \mid y \geq x\}$.
Every filter can be written as a union of some prime filters.
Let $\F_P$ denote the collection of filters of $P$.
\end{definition}

We can define the dual notion referred to as an {\em ideal} above as a subposet closed under the lower order.
An ideal of $P$ is a filter of the opposite poset $P^{\op}$; hence, we mainly focus on filters in this paper.
The dual versions of definitions and propositions provided below can be considered for ideals.

An important property of the Euler characteristic with respect to filters is the inclusion-exclusion formula.
If $Q_{1}$ and $Q_{2}$ are filters of a finite poset $P$, then the following equality holds (Corollary 3.4 in \cite{Tan}):
\[
\chi(Q_{1} \cup Q_{2})=\chi(Q_{1}) + \chi(Q_{2}) -\chi(Q_{1} \cap Q_{2}).
\]

Using this property, we can establish the integration theory with respect to Euler characteristic. Let $P$ be a finite poset, and let $Q$ be a subposet of $P$.
The {\em incidence function}  $\delta_{Q} : P \to \Z$ is defined by $\delta_{Q}(x)=1$ if $x \in Q$, $\delta_{Q}(x)=0$ otherwise.

Note that any function $f : P \to \Z$ on a finite poset $P$ can be written (not uniquely) as a finite linear form $f=\sum_{i} a_{i} \delta_{Q_{i}}$, where $a_{i} \in \Z$ and $Q_{i} \in \F_P$. We refer to it as a {\em filter linear form} of $f$.

\begin{definition}
Let $f : P \to \Z$ be a function on a finite poset $P$ with a filter linear form $\sum_{i} a_{i} \delta_{Q_i}$.
The {\em Euler calculus} of $f$ is defined as follows:
\[
\int_{P}f d \chi = \sum_{i} a_{i} \chi(Q_{i}).
\]
\end{definition}
Note that this does not depend on the choice of filter linear forms of $f$ by the inclusion-exclusion formula of Euler characteristic.

\begin{remark}
The original work on topological Euler calculus by Baryshnikov and Ghrist \cite{BG09} dealt with {\em definable function} as integrable functions. This notion is based on triangulation of spaces, and incidence functions on simplices or subcomplexes.
A function $f$ on a poset $P$ with a filter liner form $\sum_{i} a_{i} \delta_{Q_i}$, we can construct a definable function on $\K(P^{\op})$ as $\tilde{f} = \sum_{i} a_{i} \delta_{\K(Q_i)}$. Note that $Q_i$ is an ideal of $P^{\op}$, and $\K(Q_i)$ is a subcomplex of $\K(P^{\op})$ for each $i$. Since $\chi(Q_i)=\chi(\K(Q_i))$, as we mentioned earlier, the discrete Euler calculus of $f$ on $P$ coincides with the topological one of $\tilde{f}$ on $\K(P^{\op})$:
\[
\int_{P} f d \chi = \sum_{i} a_{i} \chi(Q_{i}) = \sum_{i} a_{i} \chi(\K(Q_{i})) = \int_{\K(P^{\op})} \tilde{f} d \chi.
\]

\end{remark}

Now we recall our setting of sensor networks for enumerating targets by Euler calculus.
For an acyclic network $P$, assume that finite targets lie on the network.
These are regarded as a discrete subset $T$ in the Hasse diagram (one-dimensional simplicial complex) of $P$.
Examples include line breakage points, bugs or errors, and traffic jams.

Each node is equipped with a sensor detecting targets lying on a lower position than itself.
In other words, each sensor at a node $x \in P$ can count the number of targets lying on the prime ideal
$P_{\leq x}=\{y \in P \mid y \leq x\}$ and lines spanning nodes in this ideal. 
It returns the {\em counting function} $h$ on $P$ given by $h(x)$ as the number of targets detected by the sensor at $x$. An {\em acyclic sensor network} consists of a triple $(P,T,h)$: the underlying finite poset $P$, the set of targets $T$ lying on $P$, and the counting function $h$ on $P$ obtained by the sensors detecting the targets.

One of main results in \cite{Tan16} was to show that the number of targets can be computed from the Euler calculus of the counting function, as a combinatorial analog of \cite{BG09}.

\begin{theorem}[Theorem 4.1 of \cite{Tan16}]\label{count}
Let $(P,T,h)$ be an acyclic sensor network.
The number of targets $T^{\sharp}$ is equal to the Euler calculus of the counting function:
\[
\int_{P}h d\chi = T^{\sharp}.
\]
\end{theorem}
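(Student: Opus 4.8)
The plan is to reduce the statement to the single fact that a prime filter has Euler characteristic $1$, exploiting the linearity built into the definition of $\int_{P} \cdot\, d\chi$. First I would analyze, for a single target $t \in T$, the set of sensors that detect it, namely its support $S_{t} = \{x \in P \mid t \in P_{\leq x}\}$. The counting function then decomposes as $h = \sum_{t \in T} \delta_{S_{t}}$, because $h(x)$ is by definition the number of targets lying in $P_{\leq x}$, which is exactly the number of $t$ with $x \in S_{t}$. Provided each $S_{t}$ is a filter, this is a filter linear form of $h$, and the definition of the Euler calculus yields $\int_{P} h\, d\chi = \sum_{t \in T} \chi(S_{t})$ at once.

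Next I would identify each support $S_{t}$ explicitly. If $t$ sits at a node $y$, then $t \in P_{\leq x}$ iff $y \leq x$, so $S_{t} = P_{\geq y}$. If $t$ lies on an edge spanned by a covering pair $y < z$, then that edge survives in the Hasse diagram of $P_{\leq x}$ exactly when $z \leq x$: since $P_{\leq x}$ is a down-set, $z \leq x$ forces $y \leq x$ and preserves the covering relation, so $S_{t} = P_{\geq z}$. In either case $S_{t}$ is a prime filter $P_{\geq p}$, and in particular a filter, so the decomposition above is a legitimate filter linear form.

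It then remains to show $\chi(P_{\geq p}) = 1$ for every $p$, which is where the homotopical content enters. The order complex $\K(P_{\geq p})$ is a cone with apex $p$, since every chain in $P_{\geq p}$ extends by prepending the minimum element $p$; a cone is contractible, so $\chi(P_{\geq p}) = \chi(\K(P_{\geq p})) = 1$ by the identity $\chi(P)=\chi(\K(P))$. Summing over all targets gives $\int_{P} h\, d\chi = \sum_{t \in T} 1 = T^{\sharp}$, as desired.

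I expect the main obstacle to be the careful bookkeeping for targets located on edges rather than at nodes. One must verify that passing to the prime ideal $P_{\leq x}$ retains precisely the covering edges below each node, so that the support of an edge-target is genuinely the prime filter over its top vertex, with neither spurious detections nor lost ones; the rest is a direct application of linearity and the contractibility of prime filters.
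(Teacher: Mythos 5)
Your proposal is correct, but note that there is nothing in this paper to compare it against: Theorem \ref{count} is stated here without proof, being imported from the author's earlier work (Theorem 4.1 of \cite{Tan16}), which is where the actual argument lives. Your reconstruction is the natural one and almost certainly matches that cited proof in substance: decompose the counting function as $h=\sum_{t\in T}\delta_{S_t}$ where $S_t$ is the set of sensors detecting $t$, observe that each $S_t$ is a prime filter $P_{\geq p}$ (over the target's node, or over the top vertex of its edge), use well-definedness of the Euler calculus on filter linear forms to get $\int_P h\,d\chi=\sum_t \chi(S_t)$, and conclude from $\chi(P_{\geq p})=1$, which holds since $P_{\geq p}$ has minimum $p$ and so its order complex is a cone. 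The one genuinely delicate step is the bookkeeping for edge targets, and you handled it correctly: for a covering pair $y\lessdot z$ in $P$, the down-set property of $P_{\leq x}$ guarantees both that the covering relation survives in $P_{\leq x}$ when $z\leq x$ (any $w$ with $y<w<z$ would satisfy $w\leq x$, so none exists) and that no spurious covering edges are created, so the support of such a target is exactly $P_{\geq z}$. In short: the proof is complete and correct; it simply cannot be credited as ``the same as the paper's proof'' or ``different from it,'' since the paper delegates the proof to \cite{Tan16}.
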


Let us consider the following example.

\begin{example}\label{example1} A counting function $h$ on an acyclic sensor network is described on the following Hasse diagram of a poset $P$: 
\[
\xymatrix{
& \stackrel{3}{\bullet} \ar@{-}[rd] \ar@{-}[ld] && \stackrel{4}{\bullet} \ar@{-}[dr] \ar@{-}[dl] \ar@{-}[drrr] \ar@{-}[dlll]&& \stackrel{3}{\bullet} \ar@{-}[dr] \ar@{-}[dl] \\
\stackrel{1}{\bullet} \ar@{-}[d] \ar@{-}[drr]&& \stackrel{1}{\bullet}  \ar@{-}[dll] \ar@{-}[drr] && \stackrel{0}{\bullet} \ar@{-}[dll]  \ar@{-}[drr]&& \stackrel{2}{\bullet} \ar@{-}[d] \ar@{-}[dll]\\
\stackrel{0}{\bullet} && \stackrel{0}{\bullet} && \stackrel{1}{\bullet}  && \stackrel{0}{\bullet} \\
}
\]
The question is how many targets lie on this network. 
Theorem \ref{count} states that the answer can be found using Euler calculus.
Now the counting function $h$ has the following filter linear form using excursion sets:
\[
h = \delta_{h \geq 1} + \delta_{h \geq 2} + \delta_{h \geq 3} + \delta_{h \geq 4},
\]
where the excursion set $h \geq i$ is defined as $\{ x \in P \mid h(x) \geq i\}$ for each $i \geq 1$.
Note that the order complex of $h \geq 1$ is homotopy equivalent to a circle $S^1$; hence, the Euler characteristic $\chi(h \geq 1)=\chi(S^1)=0$.
The Euler calculus of $h$ is computed as follows:
\[
\int_{P} h d \chi = \chi(h \geq 1) + \chi(h \geq 2) + \chi(h \geq 3) + \chi(h \geq 4) = 0+2+3+1=6.
\]
There are six targets lying on the network.
Indeed, this counting function is given as the following targets described as the symbol $\ast$.
\[
\xymatrix{
& \stackrel{3}{\bullet} \ar@{-}[rd] \ar@{-}[ld]|{\ast} && \stackrel{4}{\bullet} \ar@{-}[dr] \ar@{-}[dl]|{\ast} \ar@{-}[drrr] \ar@{-}[dlll]&& \stackrel{3}{\bullet} \ar@{-}[dr]|{\ast} \ar@{-}[dl] \\
\stackrel{1}{\bullet} \ar@{-}[d]|{\ast} \ar@{-}[drr]&& \stackrel{1}{\bullet}  \ar@{-}[dll] \ar@{-}[drr] && \stackrel{0}{\bullet} \ar@{-}[dll]  \ar@{-}[drr]&& \stackrel{2}{\bullet} \ar@{-}[d]|{\ast} \ar@{-}[dll]\\
\stackrel{0}{\bullet} && \stackrel{0}{\bullet} && \stackrel{1}{\ast}  && \stackrel{0}{\bullet} \\
}
\]
Note that we do not know where the targets are. The next diagram arranges the targets such that the positions they occupy differ from those given above, without changing the counting function.
\[
\xymatrix{
& \stackrel{3}{\bullet} \ar@{-}[rd] \ar@{-}[ld]|{\ast} && \stackrel{4}{\bullet} \ar@{-}[dr]|{\ast} \ar@{-}[dl] \ar@{-}[drrr] \ar@{-}[dlll]&& \stackrel{3}{\bullet} \ar@{-}[dr]|{\ast} \ar@{-}[dl] \\
\stackrel{1}{\bullet} \ar@{-}[d]|{\ast} \ar@{-}[drr]&& \stackrel{1}{\bullet}  \ar@{-}[dll] \ar@{-}[drr] && \stackrel{0}{\bullet} \ar@{-}[dll]  \ar@{-}[drr]&& \stackrel{2}{\bullet} \ar@{-}[d]|{\ast} \ar@{-}[dll]\\
\stackrel{0}{\bullet} && \stackrel{0}{\bullet} && \stackrel{1}{\ast}  && \stackrel{0}{\bullet} \\
}
\]
\end{example}

Next, we consider a very simple case. Let $(P,T,h)$ be an acyclic sensor network, 
and let $P$ have a unique maximal point $x$. The sensor at $x$ can detect all targets; 
hence, $h(x)=T^{\sharp}= \int_{P} h d \chi$ holds. This is a general property of Euler calculus.

\begin{proposition}[Proposition 3.11 of \cite{Tan16}] \label{contractible}
If a finite poset $P$ has a unique maximal point $x$, then the Euler calculus of a function $h$ on $P$ is equal to $h(x)$:
\[
\int_{P} h d \chi = h(x).
\]
\end{proposition}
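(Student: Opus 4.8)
The plan is to pass through a filter linear form of $h$ and exploit the fact that every nonempty filter of $P$ must contain the top point $x$. First I would record the key geometric observation: since $x$ is a greatest element (so that $P_{\le x}=P$ and the sensor at $x$ sees all targets), any nonempty filter $Q$ of $P$ satisfies $x\in Q$, because a filter is upward closed and $y\le x$ for every $y\in Q$. Consequently $x$ is also the greatest element of $Q$, so the order complex $\K(Q)$ is a cone with apex $x$ and is therefore contractible. Using the stated identity $\chi(Q)=\chi(\K(Q))$, this yields $\chi(Q)=1$ for every nonempty filter $Q$, while $\chi(\emptyset)=0$.

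Next I would write $h$ in a filter linear form $h=\sum_i a_i\delta_{Q_i}$ with $Q_i\in\F_P$ and compute the two sides separately. For the integral, the definition gives $\int_P h\,d\chi=\sum_i a_i\chi(Q_i)$, and by the previous step only the nonempty filters contribute, so this equals $\sum_{Q_i\ne\emptyset} a_i$. For the value at $x$, evaluating the linear form gives $h(x)=\sum_i a_i\delta_{Q_i}(x)=\sum_{x\in Q_i} a_i$. The crucial point is that the index set $\{i: x\in Q_i\}$ coincides with $\{i: Q_i\ne\emptyset\}$: the inclusion $\subseteq$ is immediate, and $\supseteq$ is exactly the observation that $x$ lies in every nonempty filter. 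Hence $h(x)=\sum_{Q_i\ne\emptyset} a_i=\int_P h\,d\chi$, as desired.

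The only real obstacle is the first step, namely justifying $\chi(Q)=1$ for a filter possessing a greatest element. I would handle this either topologically, via $\chi(Q)=\chi(\K(Q))$ together with the contractibility of the cone $\K(Q)=x * \K(Q\setminus\{x\})$, or purely combinatorially from the M\"obius/zeta description by checking directly that a poset with a greatest element has combinatorial Euler characteristic $1$. I should also note that the concluding equality is independent of the chosen filter linear form of $h$, which is guaranteed by the inclusion--exclusion formula remarked upon after the definition of the Euler calculus; thus the argument is insensitive to the (non-unique) representation $h=\sum_i a_i\delta_{Q_i}$.
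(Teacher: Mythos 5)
Your proof is correct, but it takes a genuinely different route from the paper's. The paper never proves Proposition~\ref{contractible} directly in this text: it imports the statement from Proposition 3.11 of \cite{Tan16}, and then, at the end of Section 4, re-derives it as a special case of Theorem~\ref{main} --- when $P$ has a greatest point $x$, every other point $y$ is a weak down-beat point (the subposet $P_{>y}$ contains $x$ as its maximum, hence is contractible), so the $\chi$-minimal model is the singleton $\{x\}$ and $\int_{P} h\, d\chi = \int_{\{x\}} h_{|\{x\}}\, d\chi = h(x)$. That derivation rests on the machinery of $\chi$-distinguished maps (Theorem~\ref{distinguish}) and pullbacks. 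You instead argue directly from the definition of the integral: every nonempty filter of $P$ contains the greatest element $x$, hence is a cone with Euler characteristic $1$, so for any filter linear form $h=\sum_{i} a_{i}\delta_{Q_{i}}$ both $\int_{P} h\, d\chi$ and $h(x)$ collapse to the same sum $\sum_{Q_{i}\neq\emptyset} a_{i}$. Your argument is more elementary and self-contained --- it uses only the definition of Euler calculus, the identity $\chi(Q)=\chi(\K(Q))$ (or the equivalent M\"obius-function computation for a poset with a greatest element), and the well-definedness of the integral --- whereas the paper's derivation is a showcase for its general reduction theory. A further merit of your write-up is that it makes explicit that the hypothesis must be read as ``$x$ is a maximum (greatest element)'' rather than merely a maximal element: your step $P_{\leq x}=P$, equivalently ``every nonempty filter contains $x$,'' is exactly where this is used, and for a merely maximal point the statement fails (e.g., two incomparable points with $h(x)=0$ and $h(y)=1$ give $\int_{P} h\, d\chi = 1 \neq 0 = h(x)$).
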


It states that if $P$ has a unique maximal point, the other points do not affect the Euler calculus.
The maximal point is only essential, and the other points are redundant to enumerate targets in an acyclic sensor network. In section 4, we characterize these points and remove them in an acyclic network.

%%%%%%%%%%%%% Section_3 %%%%%%%%%%%%%%%%

\section{Beat points and weak beat points}

The notions of {\em beat points} and {\em weak beat points} play an important role in the homotopy theory of finite posets. These points are reducible in the sense of the homotopy theory. A finite poset can be regarded as a finite $T_0$-space whose open sets are generated by ideals \cite{Sto66}, \cite{Bar11}. Hence, in this paper, we identify both.

\begin{definition}\label{beat}
Let $P$ be a poset.
An element $x$ in $P$ is termed a {\em down-beat point} if the subposet $P_{>x}=\{y \in P \mid y>x\}$ possesses a unique minimal element. {\em Up-beat points} are defined dually. 
We refer to a point as simply a {\em beat point} if it is either a down-beat point or an up-beat point.
\end{definition}

A beat point of a finite poset does not affect the homotopy type of the original poset as a finite $T_0$-space. We obtain a minimal model with respect to the homotopy type of a finite poset by removing all beat points one by one.
This is known as the {\em core}. It is well known that the core is determined uniquely up to isomorphism, regardless of the order in which the points are removed. Stong classified the homotopy type of finite posets using their cores.

\begin{theorem}[Theorem 4 of \cite{Sto66}]
Let $P$ and $Q$ be finite posets. 
 $P$ is homotopy equivalent to $Q$ if and only if their cores are isomorphic to each other. In particular, a finite poset $P$ is contractible if and only if the core consists of a single point.
\end{theorem}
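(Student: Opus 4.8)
The plan is to separate the two implications. The backward direction is immediate from the facts already recalled: writing $P_{c}$ for the core of $P$, if the cores $P_{c}$ and $Q_{c}$ are isomorphic then $P \simeq P_{c} \cong Q_{c} \simeq Q$, so $P$ and $Q$ are homotopy equivalent. All of the content lies in the forward direction, and since $P \simeq P_{c}$ and $Q \simeq Q_{c}$ it suffices to prove the rigidity statement: \emph{any homotopy equivalence between finite posets without beat points is an isomorphism of posets}. I would deduce this from the key lemma that \emph{if $X$ has no beat points and $f\colon X \to X$ is homotopic to $\mathrm{id}_{X}$, then $f = \mathrm{id}_{X}$}: granting it, if $f\colon X \to Y$ and $g\colon Y \to X$ are mutually homotopy-inverse maps between such posets, then $gf \simeq \mathrm{id}_{X}$ and $fg \simeq \mathrm{id}_{Y}$ force $gf = \mathrm{id}_{X}$ and $fg = \mathrm{id}_{Y}$, so $f$ is an order-preserving bijection with order-preserving inverse.

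The first step toward the key lemma is the monotone case: if $X$ has no beat points and $f\colon X \to X$ is order preserving with $f(x) \le x$ for all $x$, then $f = \mathrm{id}_{X}$. I would induct on the height of $x$ (the length of a longest chain ending at $x$). If $x$ is minimal, then $f(x) \le x$ forces $f(x) = x$. For the inductive step, assume $f$ fixes every element strictly below $x$; if $f(x) < x$ were strict, then each $y < x$ satisfies $y = f(y) \le f(x)$, so $f(x)$ would be a maximum of $\{y \in X \mid y < x\} = P_{<x}$, making $x$ an up-beat point and contradicting the hypothesis. Hence $f(x) = x$. The dual argument, using down-beat points, shows that $f(x) \ge x$ for all $x$ likewise forces $f = \mathrm{id}_{X}$.

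To pass from the monotone case to an arbitrary map homotopic to the identity, I would invoke the standard combinatorial description of homotopy into a finite space: two maps into a finite $T_{0}$-space are homotopic if and only if they are joined by a finite fence of pointwise comparable maps, the basic move being that $f \le g$ pointwise already implies $f \simeq g$ (realized by the order-preserving map $X \times \{0 < 1\} \to Y$ sending the two levels to $f$ and $g$). Thus $f \simeq \mathrm{id}_{X}$ gives a chain $\mathrm{id}_{X} = f_{0}, f_{1}, \dots, f_{n} = f$ of consecutively comparable maps. Since $f_{1}$ is comparable to $f_{0} = \mathrm{id}_{X}$, the monotone case or its dual gives $f_{1} = \mathrm{id}_{X}$; deleting $f_{0}$ shortens the fence, and induction on its length yields $f = \mathrm{id}_{X}$. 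This proves the key lemma and hence the rigidity statement (which incidentally re-proves the uniqueness of the core used above), completing the forward implication. The final assertion follows by taking $Q$ to be a one-point poset, which is already its own core, so $P$ is contractible precisely when $P_{c}$ is a single point.

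The main obstacle is the key lemma, and its honest content is twofold: the inductive beat-point argument in the monotone case, and the reduction of an arbitrary self-homotopy to a fence of pointwise comparable maps. The second ingredient rests on the combinatorial characterization of homotopies between finite spaces, which I would either cite from the finite-space literature or re-derive from the observation about $X \times \{0 < 1\}$ noted above.
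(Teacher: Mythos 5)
The paper contains no proof of this statement: it is imported verbatim as Theorem 4 of \cite{Sto66}, so there is nothing internal to compare your argument against, and what follows is an assessment of the argument itself. Your proposal is correct, and it is in essence the classical Stong--Barmak proof of the classification theorem. The decomposition is the standard one: the backward implication follows from $P \simeq P_{c}$ and $Q \simeq Q_{c}$; the forward implication reduces to the rigidity statement that a homotopy equivalence between finite posets without beat points is an isomorphism; rigidity follows from the key lemma that a self-map of such a poset homotopic to the identity is the identity; and the key lemma splits into the height induction (a continuous self-map comparable to the identity is the identity) plus the fence description of homotopy. Your height induction is sound and correctly phrased in this paper's conventions: if $f \le \mathrm{id}$ pointwise and all points strictly below $x$ are already fixed, then $f(x) < x$ would make $f(x)$ the maximum of $P_{<x}$, exhibiting $x$ as an up-beat point (the paper's ``down-beat'' is the condition on $P_{>x}$, so your labels match its dual convention). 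Two ingredients deserve flagging. First, the fence characterization --- two maps into a finite space are homotopic if and only if they are joined by a chain of \emph{continuous} maps $f = f_{0}, f_{1}, \dots, f_{n} = g$ with $f_{i}$ and $f_{i+1}$ comparable --- is where the genuine point-set content lives ($Y^{X}$ with the compact-open topology is again a finite space whose specialization order is the pointwise order, and path components of finite spaces coincide with order components); citing it, e.g.\ Corollary 1.2.6 of \cite{Bar11}, is legitimate, but a fully self-contained proof would have to include it. Second, your application of the monotone lemma to $f_{1}$ requires the comparability in the fence to be uniform, i.e.\ $f_{1} \le \mathrm{id}$ everywhere or $f_{1} \ge \mathrm{id}$ everywhere, not merely pointwise comparability with direction varying from point to point; this uniform version is indeed what the standard statement provides, but it is worth making explicit, since the argument would collapse under the weaker reading.
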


{\em Weak beat points} are a generalization of beat points. 

\begin{definition}\label{w_beat}
Let $P$ be a poset. 
A point $x$ in $P$ is termed a {\em weak down-beat point} if the subposet $P_{>x}=\{y \in P \mid y>x\}$ is contractible. 
{\em Weak up-beat} points are defined dually. 
We refer to a point as simply a {\em weak beat point} if it is either a weak down-beat point or a weak up-beat point.
\end{definition}

A weak beat point of a finite poset does not affect the weak homotopy type of the original poset as a finite $T_0$-space (the homotopy type of the order complex). The following example is used to explain the notions of beat points and weak beat points (the opposite poset of Example 4.2.1 of \cite{Bar11}).

\begin{example}\label{example2}Consider the poset described as the following Hasse diagram.
\[
\xymatrix{
& \stackrel{x_1}{\bullet} \ar@{-}[rd] \ar@{-}[ld] && \stackrel{x_3}{\bullet} \ar@{-}[dr] \ar@{-}[dl] \ar@{-}[drrr] \ar@{-}[dlll]&& \stackrel{x_5}{\bullet} \ar@{-}[dr] \ar@{-}[dl] \\
\stackrel{x_2}{\bullet} \ar@{-}[d] \ar@{-}[drr]&& \bullet  \ar@{-}[dll] \ar@{-}[drr] && \stackrel{x_4}{\bullet} \ar@{-}[dll]  \ar@{-}[drr]&& \bullet \ar@{-}[d] \ar@{-}[dll]\\
\bullet && \stackrel{x}{\bullet} && \bullet  && \bullet \\
}
\]
This poset does not have beat points; however, the point $x$ is a weak down-beat point. Indeed, the subposet $P_{> x}$ can be written as $x_1>x_2<x_3>x_4<x_5$, and is contractible. By removing the point $x$ from the diagram, the subposet does have beat points and  is contractible.
Consequently, the original poset is not contractible, but weakly contractible (the order complex is contractible).
\end{example}

We introduce a more general idea of weak down-beat points.

\begin{definition}
Let $P$ be a finite poset. A point $x$ is called a {\em $\chi$-point} if $\chi(P_{>x})=1$.
\end{definition}

Obviously, a weak down-beat point $x$ is a $\chi$-point because $P_{>x}$ is contractible.
\[
\textrm{Down-beat points} \Longrightarrow \textrm{Weak down-beat points} \Longrightarrow \textrm{$\chi$-points}
\]

Unlike (weak) beat points, $\chi$-points are not compatible with the (weak) homotopy type.
The following example describes a $\chi$-point that is not a weak down-beat point.

\begin{example}
The order complex of the following poset $P$ is isomorphic to the coproduct of a circle and a single point $S^{1} \coprod \{*\}$. 
\[
\xymatrix{
\bullet \ar@{-}[d] \ar@{-}[dr] & \bullet  \ar@{-}[d] \ar@{-}[dl] &\\
\bullet & \bullet  & \bullet \\
}
\]
This poset is not contractible; however, the Euler characteristic is $\chi(P)=\chi(\K(P))=\chi(S^{1})+\chi(*)=1$.
Consider the poset $\widehat{P} = P \cup \{x\}$ adding a minimal point $x$ to $P$. 
The minimal point $x$ is a $\chi$-point, but is not a weak down-beat point of $\widehat{P}$.
\end{example}

For a finite poset $P$, the {\em $\chi$-minimal model} $P_{\chi}$  is a subposet of $P$ formed by removing all $\chi$-points one by one. Let us show that the $\chi$-minimal model of a finite poset is uniquely determined.

\begin{proposition}\label{unique}
Let $x$ be a $\chi$-point of a finite poset $P$.
If $y \neq x$ is a $\chi$-point (resp. not a $\chi$-point) of $P$, then $y$ remains to be a $\chi$-point (resp. not a $\chi$-point) in $P\backslash \{x\}$.
\end{proposition}
\begin{proof}
When $x < y$ or the case that $x$ and $y$ are not ordered, we have $P_{>y}=(P \backslash \{x\})_{>y}$; hence, there is nothing to prove. In the case of $x > y$, it is suffices to show the equality $\chi\left((P \backslash \{x\})_{>y}\right)=\chi(P_{>y})$. We separate minimal points of $P_{>y}$ into 
$Z=\{z \in P_{>y} \mid z \leq x\}$ and $W=\{z \in P_{>y} \mid z \not \leq x\}$.
Define two filters $Q=\bigcup_{z \in Z} P_{\geq z}$ and $R=\bigcup_{w \in W} P_{\geq w}$. 
We notice that $\chi(Q)=\chi(Q\backslash \{x\})$ by the induction on the size of a filter $Q$ of $P_{>y}$ with $x \in Q$.
Indeed, $P_{\geq x}$ is the minimal size of $Q$ and in this case $\chi(Q)=\chi(Q\backslash \{x\})=1$ because $x$ is a $\chi$-point. Assume that $\chi(Q)=\chi(Q\backslash \{x\})$ holds when the size $Q^{\sharp} \leq n$, and consider the case $Q^{\sharp}= n+1$. Choose a minimal point $z$ in $Q$. The inclusion-exclusion formula implies
\[
\chi(Q) = \chi(Q\backslash \{z\}) + \chi(P_{\geq z}) - \chi(P_{>z}) = \chi(Q\backslash \{z,x\}) + \chi((P \backslash \{x\})_{\geq z}) - \chi((P\backslash \{x\})_{>z}) =\chi(Q \backslash \{x\}).
\]
Again the inclusion-exclusion formula shows
\[
\chi(P_{>y}) = \chi(Q \cup R) = \chi(Q)+\chi(R) -\chi(Q \cap R) =\chi(Q\backslash \{x\})+\chi(R) -\chi(Q\cap R) = \chi((P \backslash \{x\})_{>y}).
\]
\end{proof}

The above proposition guarantees that the $\chi$-minimal model does not depend on the order of removing $\chi$-points. If we denote $P(\chi)$ as the set of $\chi$-points of $P$, the $\chi$-minimal model $P_{\chi}$ is $P \backslash P(\chi)$.

\begin{corollary}
The $\chi$-minimal model of a finite poset is uniquely determined.
\end{corollary}

%%%%%%%%%%%%%%%% Section 4 %%%%%%%%%%%%%%%%%%%%%%%%

\section{Reduction of points in acyclic sensor networks}

In Section 2, we have seen that the Euler calculus enumerates targets lying on an acyclic network with sensors. This section discusses the reliability or optimization for this method as a practical problem. First, we focus on restricting a function onto the $\chi$-minimal model. We naturally expect to hold the following equality for a function $h$ on $P$:
\[
\int_{P} h d \chi = \int_{P_{\chi}} h_{|P_{\chi}} d\chi,
\]
for the function restricted to $P_{\chi}$. Of course, the right-hand side is easier to calculate than the left-hand side. We consider a more general setting using pushforwards and pullbacks.

\begin{definition}
Let $f : P \to Q$ be an order-preserving map between finite poset $P$ and $Q$.
For a function $h$ on $P$, the {\em pushforward} $f_*h$ of $h$ along $f$ is a function on $Q$ defined as follows:
\[
f_{*}h(x) = \int_{f^{-1}(P_{\leq x})} h d\chi. 
\]
\end{definition}

The Euler calculus of the pushforward is equal to that of the original function.

\begin{proposition}[Theorem 3.19 in \cite{Tan16}]\label{push}
Let $f : P \to Q$ be an order-preserving map between finite poset $P$ and $Q$.
For a function $h$ on $P$, 
\[
\int_{P} h d\chi = \int_{Q} (f_*h) d\chi.
\]
\end{proposition}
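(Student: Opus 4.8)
The statement is a discrete Fubini (or ``pushforward preserves the integral'') theorem, so the plan is to prove it by induction on $|Q|$, peeling off a maximal element of the target and using a valuation (inclusion--exclusion) property of the Euler calculus to glue the pieces back together. Before starting the induction one can simplify the integrand: both sides are $\Z$-linear in $h$, and for $h=\delta_{R}$ with $R$ a filter of $P$ one checks directly that $f_{*}(\delta_{R})(x)=\chi\big(R\cap f^{-1}(Q_{\leq x})\big)=(f|_{R})_{*}(1)(x)$, the pushforward of the constant function $1$ along the restriction $f|_{R}\colon R\to Q$; hence it would suffice to treat the constant integrand. The induction below, however, carries a general $h$ through with no extra cost, so I would run it directly.

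For the base case take $|Q|=1$, or more generally a $Q$ with a maximum $m$: then $Q_{\leq m}=Q$, so $f_{*}h(m)=\int_{f^{-1}(Q)}h\,d\chi=\int_{P}h\,d\chi$, and Proposition~\ref{contractible} gives $\int_{Q}f_{*}h\,d\chi=f_{*}h(m)=\int_{P}h\,d\chi$. For the inductive step I would fix a maximal element $m\in Q$ and set $Q'=Q\setminus\{m\}$. Then $Q=Q_{\leq m}\cup Q'$ with $Q_{\leq m}\cap Q'=Q_{<m}$, and the key point is that passing to these sub-posets does not disturb the prime ideals used by the pushforward: for $x\in Q'$ one has $Q_{\leq x}=Q'_{\leq x}$ (as $m\not\leq x$), and likewise $Q_{\leq x}=(Q_{<m})_{\leq x}$ for $x\in Q_{<m}$. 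Consequently $f_{*}h$ restricted to $Q'$ (resp.\ to $Q_{<m}$) is exactly the pushforward of $h$ along the restricted map into $Q'$ (resp.\ $Q_{<m}$), so the induction hypothesis applies to these smaller targets and identifies their integrals with $\int_{f^{-1}(Q')}h\,d\chi$ and $\int_{f^{-1}(Q_{<m})}h\,d\chi$. The remaining piece over $Q_{\leq m}$ has a maximum $m$, so by Proposition~\ref{contractible} its integral is $f_{*}h(m)=\int_{f^{-1}(Q_{\leq m})}h\,d\chi$.

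What must glue all of this together, on both the $Q$-side and the $P$-side, is an additivity identity $\int_{X\cup Y}h\,d\chi=\int_{X}h\,d\chi+\int_{Y}h\,d\chi-\int_{X\cap Y}h\,d\chi$ for the ideal covers $\{Q_{\leq m},Q'\}$ of $Q$ and $\{f^{-1}(Q_{\leq m}),f^{-1}(Q')\}$ of $P$; this is the main obstacle. The inclusion--exclusion formula quoted in Section~2 is stated for filters, whereas the relevant cover pieces are ideals. I would bridge this gap using the fact that the order complex is insensitive to reversing the order, $\K(S)=\K(S^{\op})$, so the quoted formula applied to the opposite poset yields the same inclusion--exclusion for ideals. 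To obtain the statement for the integral rather than for $\chi$ alone, I would expand the integrand in a filter linear form $\sum_{i}a_{i}\delta_{F_{i}}$: intersecting a filter $F_{i}$ with an ideal of the ambient poset yields an ideal of $F_{i}$, so the required additivity reduces, term by term, to the ideal inclusion--exclusion applied inside each $F_{i}$. The delicate points to verify are precisely that each cover piece is genuinely an ideal, that the pairwise intersections are the claimed smaller ideals, and that the preimages satisfy $f^{-1}(Q_{\leq m})\cup f^{-1}(Q')=P$ and $f^{-1}(Q_{\leq m})\cap f^{-1}(Q')=f^{-1}(Q_{<m})$; granting these, the three pieces recombine to $\int_{P}h\,d\chi$ and the induction closes.
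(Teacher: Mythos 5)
Your argument is correct, but there is nothing in this paper to compare it against: Proposition \ref{push} is imported verbatim from Theorem 3.19 of \cite{Tan16}, and the present paper gives no proof of it at all. Measured on its own terms, your induction on $|Q|$ is sound. The base case is exactly Proposition \ref{contractible} (note that ``maximal point'' there must be read as ``greatest element,'' which is how the paper uses that proposition, and your piece $Q_{\leq m}$ does have a greatest element). The identifications $Q_{\leq x}=Q'_{\leq x}$ for $x\in Q'$ and $\bigl(f|_{f^{-1}(Q')}\bigr)^{-1}(Q'_{\leq x})=f^{-1}(Q_{\leq x})$ are precisely what let the induction hypothesis apply to the restricted maps, and you verify them. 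You also correctly isolate the real technical content, namely additivity of the integral over the ideal covers $\{Q_{\leq m},\,Q\setminus\{m\}\}$ of $Q$ and $\{f^{-1}(Q_{\leq m}),\,f^{-1}(Q\setminus\{m\})\}$ of $P$; your bridge --- dualizing the filter inclusion--exclusion to ideals via $\K(S)=\K(S^{\op})$ (explicitly sanctioned by the paper's remark that dual statements hold for ideals), then expanding the integrand in a filter linear form and observing that a filter intersected with an ambient ideal is an ideal of that filter --- is valid, with the independence of the integral from the chosen filter linear form covering the remaining bookkeeping. Compared with the paper's route of simply citing \cite{Tan16}: your proof is elementary and self-contained within the toolkit of this paper (Proposition \ref{contractible} plus inclusion--exclusion), which a reader can check without leaving the text; what it does not give is the generality of the cited theorem, which is proved in \cite{Tan16} for finite categories rather than just posets. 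One cosmetic point: the paper's definition of the pushforward writes $f^{-1}(P_{\leq x})$, an evident typo for $f^{-1}(Q_{\leq x})$; you silently adopted the correct reading, which is the intended one.
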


A down-beat point $x$ of a finite poset $P$ determines a retraction $P \to P\backslash\{x\}$ sending $x$ to the minimal point of $P_{>x}$. The property of this map is characterized as the notion of {\em (ascending) closure operators} (see Section 13.2 in \cite{Koz08}).

\begin{definition}
An order-preserving map $r : P \to P$ on a poset $P$ is termed an {\em ascending closure operator} if 
$r^{2}=r$ and $r(x) \geq x$ for any $x \in P$.
\end{definition}

Obviously, the composition of the retraction $P \to P\backslash \{x\}$ associated to a down-beat point $x$ of $P$ with the inclusion is an ascending closure operator.
In terms of the homotopy theory for finite posets, a closure operator $r : P \to P$ is a deformation retraction onto its image; thus, $P$ and $r(P)$ are homotopy equivalent as finite $T_0$-spaces.
We regard a closure operator as a map onto its image $r : P \to r(P)$, and examine its pushforward.

\begin{proposition}
If $r : P \to r(P)$ is an ascending closure operator on a finite poset $P$, then the pushforward
$r_{*}h$ coincides with the restriction $h_{|r(P)}$ for any function $h$ on $P$.
\end{proposition}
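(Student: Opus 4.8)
The plan is to fix an element $x \in r(P)$ and evaluate the pushforward $r_{*}h(x)$ directly from its definition, showing that it collapses to $h(x)$. Unwinding the definition for the map $r : P \to r(P)$ gives
\[
r_{*}h(x) = \int_{r^{-1}\big((r(P))_{\leq x}\big)} h\, d\chi,
\]
so the whole task reduces to understanding the subposet $S := r^{-1}\big((r(P))_{\leq x}\big) = \{ y \in P \mid r(y) \leq x\}$ and computing the Euler calculus of $h$ over it. First I would record that $r$ fixes every point of its image: since $x \in r(P)$ we may write $x = r(z)$ for some $z \in P$, and then idempotency gives $r(x) = r^{2}(z) = r(z) = x$. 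In particular $r(x) = x \leq x$, so $x \in S$.

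The key step is to upgrade the observation that $x$ is \emph{a} maximal element of $S$ to the statement that $x$ is the \emph{greatest} element of $S$, since it is a top element that Proposition \ref{contractible} requires. For any $y \in S$ the ascending property supplies $y \leq r(y)$, while membership in $S$ gives $r(y) \leq x$; chaining these two inequalities yields $y \leq r(y) \leq x$, hence $y \leq x$. Thus every element of $S$ lies below $x$, and together with $x \in S$ this shows that $x$ is the greatest element of the finite poset $S$.

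With this in hand the conclusion is immediate. Applying Proposition \ref{contractible} to the finite poset $S$ equipped with its greatest element $x$ (and the restriction of $h$) gives $\int_{S} h\, d\chi = h(x)$, so $r_{*}h(x) = h(x) = h_{|r(P)}(x)$ for every $x \in r(P)$, which is exactly the asserted equality $r_{*}h = h_{|r(P)}$. The only substantive point in the argument is the identification of the greatest element of $S$: one might be tempted to route through the homotopy-theoretic picture of $r$ as a deformation retraction, but the clean path is the two-line order computation $y \leq r(y) \leq x$. I expect this to be the main (indeed the sole) obstacle, because it is precisely the ascending hypothesis $r(y) \geq y$ that converts ``$x$ is maximal in $S$'' into ``$x$ is the maximum of $S$,'' and without it Proposition \ref{contractible} could not be invoked.
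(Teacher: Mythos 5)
Your proposal is correct and follows essentially the same route as the paper's own proof: unwind the definition of $r_{*}h(x)$ over the preimage $\{y \in P \mid r(y) \leq x\}$, use idempotency to get $r(x)=x$ so that $x$ lies in this preimage, use the ascending property to get $y \leq r(y) \leq x$ so that $x$ is its greatest element, and then invoke Proposition \ref{contractible}. Your explicit remark that one must upgrade ``maximal'' to ``greatest'' before applying Proposition \ref{contractible} is a nice clarification of a point the paper passes over silently, but it is the same argument.
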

\begin{proof}
For a point $x \in r(P)$, there exists $y \in P$ with $r(y)=x$.
Consider the ideal below:
\[
r^{-1}(r(P)_{\leq x})=\{z \in P \mid r(z) \leq x\}.
\]
The point $x$ belongs to this ideal as a unique maximal point since $r(x)=r^2(y)=r(y)=x$, and $z \leq r(z) \leq x$ for any $z$. Proposition \ref{contractible} leads to our desired formula:
\[
r_*h(x) = \int_{r^{-1}(r(P)_{\leq x})} h d \chi= h(x).
\]
\end{proof}

\begin{corollary}
If $r : P \to r(P)$ is an ascending closure operator on a finite poset $P$, then we have
\[
\int_{P}h d \chi = \int_{r(P)} h_{| r(P)} d \chi.
\]
\end{corollary}

The above corollary states that we can ignore down-beat points of an acyclic sensor network to enumerate targets. However, does this also apply to weak down-beat points or more general $\chi$-points?
In general, these points do not induce a map $P \to P \backslash \{x\}$ unlike a down-beat point. 
Now we propose the dual idea of pushforward, simply defined by composition.

\begin{definition}
For a map $f : P \to Q$ (not necessarily order preserving) between finite posets $P$ and $Q$ and a function $h$ on $Q$, the {\em pullback} $f^*h$ is a function on $P$ defined by the composition $h \circ f$.
\end{definition}

Unfortunately, the pullback does not generally hold a similar formula to Proposition \ref{push}.
We find a class of maps on posets compatible with respect to Euler calculus.
The following notion of {\em distinguished maps} was introduced in Definition 4.2 of \cite{BM08}.

\begin{definition}
An order-preserving map $f : P \to Q$ is {\em distinguished} if the inverse image $f^{-1}(Q_{\geq x})$ is contractible for any $x \in Q$.
\end{definition}

Quillen's theorem A (\cite{Qui73}, \cite{McC66}) states that a distinguished map is a weak homotopy equivalence on finite posets (or induces a homotopy equivalence on the order complexes). 
We introduce a more general notion as follows.

\begin{definition}
An order-preserving map $f : P \to Q$ is {\em $\chi$-distinguished} if the inverse image  $f^{-1}(Q_{\geq x})$ has the Euler characteristic $1$ for any $x \in Q$.
\end{definition}

This map is compatible with the pullback and the Euler calculus.

\begin{theorem}\label{distinguish}
If $f : P \to Q$ is a $\chi$-distinguished map, then we have
\[
\int_{Q} h d \chi = \int_{P} (f^*h) d\chi,
\]
for a function $h$ on $Q$.
\end{theorem}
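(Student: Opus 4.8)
The plan is to reduce everything to indicator functions of filters and then establish a single identity about Euler characteristics of preimages. By the definition of Euler calculus and the evident linearity of both sides of the asserted equation in $h$, it suffices to treat the case $h = \delta_W$ for a filter $W$ of $Q$. For such $h$ the left-hand side is simply $\int_Q \delta_W \, d\chi = \chi(W)$. On the right-hand side, since $f$ is order-preserving, the pullback satisfies $f^*\delta_W = \delta_W \circ f = \delta_{f^{-1}(W)}$, and $f^{-1}(W)$ is again a filter of $P$ (if $z \in f^{-1}(W)$ and $z \le z'$, then $f(z) \le f(z')$ forces $f(z') \in W$). Hence $\int_P (f^*\delta_W)\, d\chi = \chi(f^{-1}(W))$, and the theorem reduces to proving
\[
\chi(f^{-1}(W)) = \chi(W) \quad \text{for every filter } W \text{ of } Q.
\]

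I would prove this by induction on the number of elements $|W|$. The base case $W = \emptyset$ is immediate, since both the filter and its preimage are empty and $\chi(\emptyset)=0$. For the inductive step, choose a minimal element $x$ of $W$ and set $W_0 = W \setminus \{x\}$; minimality guarantees that $W_0$ is still a filter, and one checks $W = W_0 \cup Q_{\ge x}$. Applying the inclusion-exclusion formula for $\chi$ on filters gives
\[
\chi(W) = \chi(W_0) + \chi(Q_{\ge x}) - \chi(W_0 \cap Q_{\ge x}).
\]
Because preimages commute with unions and intersections, the decomposition $f^{-1}(W) = f^{-1}(W_0) \cup f^{-1}(Q_{\ge x})$ with $f^{-1}(W_0) \cap f^{-1}(Q_{\ge x}) = f^{-1}(W_0 \cap Q_{\ge x})$ yields the parallel identity
\[
\chi(f^{-1}(W)) = \chi(f^{-1}(W_0)) + \chi(f^{-1}(Q_{\ge x})) - \chi(f^{-1}(W_0 \cap Q_{\ge x})).
\]

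It then remains to match the three terms. The prime-filter term is exactly where the hypothesis enters: $Q_{\ge x}$ has the minimum element $x$, hence is contractible with $\chi(Q_{\ge x})=1$, while $f$ being $\chi$-distinguished gives $\chi(f^{-1}(Q_{\ge x}))=1$, so these agree. The remaining filters $W_0$ and $W_0 \cap Q_{\ge x}$ are both contained in $W_0$, hence each has strictly fewer elements than $W$, and the inductive hypothesis supplies $\chi(f^{-1}(W_0)) = \chi(W_0)$ and $\chi(f^{-1}(W_0 \cap Q_{\ge x})) = \chi(W_0 \cap Q_{\ge x})$. Comparing the two displayed inclusion-exclusion expressions term by term produces $\chi(f^{-1}(W)) = \chi(W)$, closing the induction.

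I expect the main obstacle to be organizational rather than conceptual: one must verify that peeling off a minimal element keeps every set involved a filter of strictly smaller size, so that both the induction and the inclusion-exclusion formula (which is stated only for filters) apply, and one must observe that the $\chi$-distinguished hypothesis — phrased only for the prime filters $Q_{\ge x}$ — is precisely what seeds the single nontrivial term in the inductive step. The compatibility of $f^{-1}$ with unions and intersections is what lets the identical inclusion-exclusion pattern run simultaneously over $Q$ and over $P$.
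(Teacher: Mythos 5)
Your proof is correct, but it takes a genuinely different route from the paper's. The paper expands $h$ globally in terms of prime filters, $h = \sum_{x \in Q} a_{x}\, \delta_{Q_{\geq x}}$ (such an expansion exists by M\"obius inversion, i.e.\ invertibility of the zeta matrix), observes that the pullback is then $f^{*}h = \sum_{x \in Q} a_{x}\, \delta_{f^{-1}(Q_{\geq x})}$, and lets both integrals collapse to $\sum_{x} a_{x}$ because $\chi(Q_{\geq x}) = 1 = \chi\bigl(f^{-1}(Q_{\geq x})\bigr)$ --- the first since $Q_{\geq x}$ has minimum $x$, the second by the $\chi$-distinguished hypothesis. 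You instead reduce by linearity to $h = \delta_{W}$ for an \emph{arbitrary} filter $W$ and prove the intermediate lemma $\chi\bigl(f^{-1}(W)\bigr) = \chi(W)$ by strong induction on $|W|$, peeling off a minimal element $x$ so that $W = W_{0} \cup Q_{\geq x}$ and running inclusion-exclusion in parallel over $Q$ and over $P$; all the steps you flag (that $W_{0}$, $W_{0} \cap Q_{\geq x}$, and the preimages are filters, and that strong induction applies to both smaller filters) do check out. What your approach buys: it uses only the defining property of Euler calculus (existence of \emph{some} filter linear form) together with the inclusion-exclusion formula, avoiding the prime-filter expansion entirely, and it isolates a clean reusable statement --- a $\chi$-distinguished map preserves the Euler characteristic of the preimage of \emph{every} filter, not just the prime ones. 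What the paper's approach buys is brevity: once the prime-filter expansion is granted, the whole argument is a three-line computation. Both proofs invoke the $\chi$-distinguished hypothesis at exactly the same point, namely to evaluate the terms $\chi\bigl(f^{-1}(Q_{\geq x})\bigr)$.
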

\begin{proof}
For a function $h$ on $Q$, it can be written as the following filter linear form using prime filters:
\[
h = \sum_{x \in Q} a_{x} \delta_{Q_{\geq x}}.
\]
The pullback $f^*h$ maps
\[
f^*h(y) = h(f(y)) = \sum_{x \leq f(y)} a_{x},
\]
for $y \in P$. It implies that $f^*h$ has the following filter linear form:
\[
f^*h = \sum_{x \in Q} a_{x} \delta_{f^{-1}(Q_{\geq x})}.
\]
By the definition of $\chi$-distinguished maps, $\chi(f^{-1}(Q_{\geq x}))=1$ for each $x \in Q$.
It shows the desired formula:
\[
\int_{P} (f^*h) d\chi = \sum_{x \in Q} a_{x} = \int_{Q} h d\chi.
\]
\end{proof}

When we take $h$ as the incidence function $\delta_{Q}$ on the whole space in the setting above, it implies the following equality:
\[
\chi(P)= \int_{P} \delta_{P} d \chi= \int_{P} f^*(\delta_{Q}) d\chi  =\int_{Q} \delta_{Q} d\chi = \chi(Q).
\]

\begin{corollary}
If there exists a $\chi$-distinguished map between $P$ and $Q$, then we have $\chi(P)=\chi(Q)$.
\end{corollary}

For a $\chi$-point $x$ in a finite poset $P$, the inclusion $P\backslash\{x\} \hookrightarrow P$
is $\chi$-distinguished. The pullback of a function on $P$ along this inclusion is the restriction onto $P \backslash \{x\}$. The following corollary follows from Theorem \ref{distinguish}.

\begin{corollary}\label{main}
For a function $h$ on a finite poset $P$, we have
\[
\int_{P} h d\chi=\int_{P_{\chi}} h_{|P_{\chi}} d\chi.
\]
\end{corollary}

\begin{corollary}
Let $(P,T,h)$ be an acyclic sensor network. If we denote $\widetilde{P}_{\chi}$ as a subposet $\{ x \in P_{\chi} \mid h(x) \geq 1\}$, then
\[
\int_{P} h d\chi=\int_{\widetilde{P}_{\chi}} h_{|\widetilde{P}_{\chi}} d\chi.
\]
\end{corollary}
\begin{proof}
As we have seen in Example \ref{example1}, the Euler calculus of the counting function can be computed as the telescope sum of the Euler characteristics for excursion sets. The points taking the value zero do not affect the result.
\end{proof}

Recall the counting function in Example \ref{example1}. 
We can remove the two middle-bottom weak down-beat points, and the points taking the value zero as follows:
\[
\xymatrix{
& \stackrel{3}{\bullet} \ar@{-}[rd] \ar@{-}[ld] && \stackrel{4}{\bullet} \ar@{-}[dl] \ar@{-}[drrr] \ar@{-}[dlll]&& \stackrel{3}{\bullet} \ar@{-}[dr] \\
\stackrel{1}{\bullet} && \stackrel{1}{\bullet}    && && \stackrel{2}{\bullet} \\
}
\]
The Euler calculus of this function returns the correct number of targets lying on the original network.

As another example, in the last part of Section 2 we mentioned the case in which the underlying poset $P$ has a unique maximal point $x$.
In this case, any point except for the maximal point $x$ is a weak down-beat point and reducible.
We can show the formula in Proposition \ref{contractible} from Corollary \ref{main}:
\[
\int_{P} h d \chi = \int_{\{x\}} h_{|\{x\}} d \chi = h(x).
\]

The following theorem follows from Corollary \ref{main}.

\begin{theorem}\label{main2}
Let $h,h'$ be two functions on a finite poset $P$. If $h_{|P_{\chi}}=h'_{|P_{\chi}}$, then
\[
\int_{P}h d \chi = \int_{P} h' d\chi.
\]
\end{theorem}
\begin{proof}
By Corollary \ref{main}, we have 
\[
\int_{P} h d\chi=\int_{P_{\chi}} h_{|P_{\chi}} d\chi = \int_{P_{\chi}} h'_{|P_{\chi}} d\chi = \int_{P} h' d\chi.
\]
\end{proof}

This theorem states that even if the counting function on an acyclic sensor network returns wrong values on $\chi$-points, we can enumerate the correct number of targets by Euler calculus.
On the other hand, from the viewpoint of cost-cutting, we do not need to place sensors at each node to enumerate targets. It is sufficient to locate sensors on the $\chi$-minimal model.

\section*{Conclusion remarks and future work}
This paper has focused on the counting problem using discrete Euler calculus, and proposed a way to improve reliability and optimization from the viewpoint of homotopy theory.
Finally, we mention some future research directions and topics of other areas related to this work.

An artificial neural network is a computation model which simulates the work of human brains. As a simple case, it is known a feedforward neural network without loop or cycle.
This can be naturally regarded as an acyclic network (poset) with weighting, bias, and activation function. A node (neuron) receives information via lines (synapses) from the nodes located in the previous position. 
It will be interesting if the Euler calculus of some function constructed from weighting, bias, and activation function of a feedforward neural network gives a significant value in the neural network.

The next topic is related to the Radon-Nicodym theorem playing an important role for measure theory and statistics. If a measure $\nu$ is absolutely continuous with respect to $\mu$ on a measurable space, then there exists a measurable function $h$ up to $\mu$-null set satisfying
\[
\nu(A) = \int_{A} h d \mu.
\]
This theorem guarantees that there exists an essentially unique density function of $\nu$ with respect to $\mu$.
A discrete analog for Euler characteristic may be interesting to explore. That is, we aim to characterize a function $\nu$ on the set of filters of a finite poset $P$ expressed as
\[
\nu(A) = \int_{A} h d\chi
\]
for some (density) function $h$ on $P$. It will be developed to discrete analogs of the notions of information divergence, entropy, and conditional expectation with respect to Euler characteristic.

\end{document}